\documentclass{amsart}

\usepackage{amssymb,dirtytalk,comment,tikz-cd,tikz,color}
\usepackage[hidelinks]{hyperref}

\newtheorem{theorem}{Theorem}[section]
\newtheorem{lemma}[theorem]{Lemma}

\theoremstyle{definition}
\newtheorem{definition}[theorem]{Definition}
\newtheorem{example}[theorem]{Example}

\newtheorem{problem}[theorem]{Problem}

\theoremstyle{remark}
\newtheorem{remark}[theorem]{Remark}

\numberwithin{equation}{section}

\DeclareMathOperator{\Aut}{Aut}
\DeclareMathOperator{\End}{End}

\DeclareMathOperator{\Perm}{Perm}

\newcommand{\N}{\mathbb{N}}
\newcommand{\Z}{\mathbb{Z}}

\begin{document}

\title[Types for which every Hopf--Galois correspondence is bijective]{Classification of the types for which every Hopf--Galois correspondence is bijective}

\author{Lorenzo Stefanello}
\address{Dipartimento di Matematica,
          Universit\`a di Pisa,
          Largo Bruno Pontecorvo 5, 56127 Pisa, Italy}
\email{lorenzo.stefanello@phd.unipi.it}
\urladdr{https://people.dm.unipi.it/stefanello/}
\thanks{The first-named author was a member of GNSAGA - INdAM}

\author{Cindy (Sin Yi) Tsang}
\address{Department of Mathematics, Ochanomizu University, 2-1-1 Otsuka, Bunkyo-ku, Tokyo, Japan}
\email{tsang.sin.yi@ocha.ac.jp}
\urladdr{http://sites.google.com/site/cindysinyitsang/}

\subjclass[2020]{Primary 12F10, 16T05, 20N99}

\keywords{Hopf--Galois structures, Hopf--Galois correspondence, skew braces}

\begin{abstract}
Let $L/K$ be any finite Galois extension with Galois group $G$. It is known by Chase and Sweedler that the Hopf--Galois correspondence is injective for every Hopf--Galois structure on $L/K$, but it need not be bijective in general. Hopf--Galois structures are known to be related to skew braces, and recently, the first-named author and Trappeniers proposed a new version of this connection with the property that the intermediate fields of $L/K$ in the image of the Hopf--Galois correspondence are in bijection with the left ideals of the associated skew brace. As an application, they classified the groups $G$ for which the Hopf--Galois correspondence is bijective for every Hopf--Galois structure on any $G$-Galois extension. In this paper, using a similar approach, we shall classify the groups $N$ for which the Hopf--Galois correspondence is bijective for every Hopf--Galois structure of type $N$ on any Galois extension.
\end{abstract}

\maketitle

\section{Introduction}\label{sec:intro}

Let $L/K$ be any finite extension of fields. A \emph{Hopf--Galois structure} on $L/K$ is a $K$-Hopf algebra $H$ together with an action $\star$ on $L$ such that $L$ is an $H$-module algebra and the natural $K$-linear map
\[ L\otimes_K H \longrightarrow \End_K(L),\quad \ell\otimes h \mapsto (x\mapsto \ell(h\star x))\]
is bijective. In this case, for each $K$-Hopf subalgebra $H'$ of $H$, we attach to it the intermediate field of $L/K$ given by
\[ L^{H'} = \{x\in L \colon h'\star x = \varepsilon(h')x\mbox{ for all }h'\in H'\},\]
where $\varepsilon \colon H \longrightarrow K$ denotes the counit of $H$. This yields the so-called \emph{Hopf--Galois correspondence} for $H$ from the $K$-Hopf subalgebras of $H$ to the intermediate fields of $L/K$. It is well-known that this correspondence is always injective \cite{CS}.

\vspace{2mm}

The notion of Hopf--Galois structure was first introduced by Chase and Sweedler \cite{CS}. The original motivation was to study purely inseparable extensions, but nowadays Hopf–Galois theory is mostly considered for separable extensions. There is a particular interest in the case of finite Galois extensions because it has applications to the study of Galois module structure of rings of integers; see \cite{Childs book}. For finite separable extensions, it is known by the groundbreaking work of Greither and Pareigis \cite{GP} that Hopf--Galois structures correspond bijectively to certain regular permutation groups. Their result is the foundation of Hopf--Galois theory for separable extensions---it reduces the study of Hopf--Galois structures to a completely group-theoretic problem. Let us briefly recall this correspondence in the special case of Galois extensions.

\vspace{2mm}

Let $L/K$ be any finite Galois extension of fields with Galois group $G$. Also let $\Perm(G)$ denote the symmetric group of $G$, and let
\[ \begin{cases}
\lambda \colon G\longrightarrow \Perm(G), &\lambda(\sigma) = (x\mapsto \sigma x)\\
\rho \colon G\longrightarrow \Perm(G), &\rho(\sigma) = (x\mapsto x\sigma^{-1})
\end{cases}\]
be the left and right regular representations of $G$. For any regular subgroup $N$ of $\Perm(G)$ that is normalized by $\lambda(G)$, we associate to it the Hopf--Galois structure 
\[ L[N]^{G} = \left\{\sum_{\eta\in N}\ell_\eta\eta \in L[N] \colon  \sigma(\ell_\eta) = \ell_{\lambda(\sigma)\eta\lambda(\sigma)^{-1}} \mbox{ for all }\sigma\in G,\ \eta\in N\right\}\]
whose action on $L$ is defined by
\begin{equation}\label{GP action} \left(\sum_{\eta\in N}\ell_\eta\eta\right)\star x = \sum_{\eta\in N} \ell_\eta \eta^{-1}(1_G)(x)\quad\mbox{for all }x\in L.
\end{equation}
This procedure gives a bijection between such regular subgroups and Hopf--Galois structures on $L/K$ (up to a natural notion of isomorphism). The \emph{type} of a Hopf--Galois structure $H$ on $L/K$ is the isomorphism class of the corresponding regular subgroup, or equivalently, the isomorphism class of a finite group $N$ for which 
\[ L\otimes_K H \simeq L[N]\]
as $L$-Hopf algebras; such a group $N$ is unique up to isomorphism. 


\vspace{2mm}

There are two obvious Hopf--Galois structures of type $G$ on $L/K$---the so-called \emph{classical} structure $H_{\rho}\simeq K[G]$ corresponding to $\rho(G)$, and the \emph{canonical nonclassical} structure $H_\lambda$ corresponding to $\lambda(G)$; their actions on $L$ are as defined in (\ref{GP action}). Note that $H_\rho$ and $H_\lambda$ coincide, namely $\rho(G) = \lambda(G)$, if and only if $G$ is abelian. The Hopf--Galois correspondence for $H_\rho$ is the usual Galois correspondence from Galois theory and so is always bijective. However, the Hopf--Galois correspondence for $H_\lambda$ is bijective (recall that injectivity always holds by \cite{CS}) if and only if every subgroup of $G$ is normal, because its image consists precisely of the normal intermediate fields of $L/K$ as is known by \cite{GP}.

\vspace{2mm}

The (non)bijectivity of the Hopf--Galois correspondence is a natural problem of interest; see \cite{GC1,GC2,GC3,EG} for some related results. More generally, knowing exactly which of the intermediate fields lie in the image can be fruitful in Galois module theory. For example, a key ingredient of \cite{Byott} (where the Galois module structure of valuation rings was studied for Galois extensions of $p$-adic fields of degree $p^2$) was the existence of a suitable intermediate field in the image of the Hopf--Galois correspondence. Instead of considering a specific Hopf--Galois correspondence, the first-named author and Trappeniers \cite{ST} fixed the Galois group $G$ and asked for conditions under which the Hopf--Galois correspondence is bijective for every Hopf--Galois structure on a $G$-Galois extension. Clearly any such group $G$ is necessarily Hamiltonian or abelian because of the canonical nonclassical structure $H_\lambda$, and a complete classification was obtained in \cite[Theorem 4.24]{ST}, as follows.

\begin{theorem}\label{thm:G} For any finite group $G$, the following are equivalent:
\begin{enumerate}
\item The Hopf--Galois correspondence is bijective for every Hopf--Galois structure on any finite $G$-Galois extension.
\item The group $G$ is cyclic and $q\nmid p-1$ for all prime divisors $p,q$ of $|G|$.
\end{enumerate}
\end{theorem}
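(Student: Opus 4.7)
The plan is to translate the problem into skew-brace language via the correspondence of \cite{ST}, which identifies the intermediate fields in the image of the Hopf--Galois correspondence for a Hopf--Galois structure on a $G$-Galois extension $L/K$ with the left ideals of an associated skew brace whose multiplicative group is isomorphic to $G$. Under this dictionary, condition (1) becomes the purely group-theoretic statement that for every skew brace $(B, +, \cdot)$ with $(B, \cdot) \cong G$, every subgroup of $(B, \cdot)$ is a left ideal of $B$. The rest of the proof is the classification of groups $G$ satisfying this.

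For the implication $(2) \Rightarrow (1)$, I would show that when $G$ is cyclic and $q \nmid p - 1$ for all prime divisors $p, q$ of $|G|$, any skew brace $(B, +, \cdot)$ with $(B, \cdot) \cong G$ must have trivial $\lambda$-action. The $\lambda$-action yields a homomorphism from $(B, \cdot) \cong G$ into $\Aut(B, +)$; combined with the Byott-style fact that the coprimality condition forces $(B, +) \cong (B, \cdot)$ (both cyclic of the same order), the number-theoretic hypothesis makes this homomorphism trivial on each Sylow subgroup, hence trivial overall. When $\lambda$ is trivial, $(B, +)$ and $(B, \cdot)$ coincide as groups and every subgroup is automatically a left ideal.

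For the converse $(1) \Rightarrow (2)$, I would proceed by successively eliminating exceptions. The canonical nonclassical structure $H_\lambda$ exists on every $G$-Galois extension, and as noted in the introduction its image in the Hopf--Galois correspondence consists precisely of the normal intermediate fields, so (1) immediately forces every subgroup of $G$ to be normal; thus $G$ is either abelian or Hamiltonian. If $G$ is Hamiltonian, one exploits the presence of $Q_8$ to build a skew brace on $G$ with a subgroup that fails to be $\lambda$-invariant. If $G$ is abelian but not cyclic, a non-cyclic Sylow subgroup permits a similar construction via nontrivial automorphisms. Finally, if $G$ is cyclic but admits primes $p, q$ with $q \mid p - 1$, the non-abelian group of order $pq$ gives rise to a skew brace with $(B, \cdot) \cong C_{pq}$ and nontrivial $\lambda$-action for which the Sylow $q$-subgroup of $(B, \cdot)$ is not a left ideal; this local obstruction can be lifted to a skew brace on all of $G$.

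The most delicate step will be the converse direction, where in each exceptional case one must exhibit an explicit skew brace on a group with $(B, \cdot) \cong G$ whose $\lambda$-action fails to preserve some prescribed subgroup of $(B, \cdot)$. This requires genuine structural input about skew braces on $Q_8$, on non-cyclic abelian groups, and on cyclic groups of order divisible by two primes $p, q$ with $q \mid p - 1$; the forward direction, by contrast, is essentially a rigidity statement that follows once one knows any skew brace of the relevant cyclic order has trivial $\lambda$-action.
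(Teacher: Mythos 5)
First, a point of orientation: the paper does not actually prove Theorem \ref{thm:G}; it quotes it from \cite[Theorem 4.24]{ST}, and the argument written out in the paper is for the analogous Theorem \ref{thm:N}. Your translation into skew braces and your outline of $(1)\Rightarrow(2)$ (reduce to Hamiltonian or abelian via $H_\lambda$, then eliminate $Q_8$, non-cyclic abelian groups, and cyclic groups with $q\mid p-1$ by exhibiting explicit skew braces) do follow the strategy of \cite{ST} and of this paper's proof of Theorem \ref{thm:N}. One caveat there: for Theorem \ref{thm:G} the prescribed group $G$ must be the \emph{multiplicative} group of the offending skew brace (it is the Galois group), which is the opposite of the examples in Section \ref{sec:ex}, where the prescribed group is the additive group (the type); so those examples cannot simply be reused, although analogous ones exist.

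The genuine gap is in your $(2)\Rightarrow(1)$ direction. You claim that if $G$ is cyclic with $q\nmid p-1$ for all prime divisors $p,q$ of $|G|$, then every skew brace with multiplicative group isomorphic to $G$ has trivial $\lambda$-action (equivalently, trivial gamma function), so that the brace is trivial. This is false: condition (2) does not force $|G|$ to be squarefree, so $\gcd(|G|,\varphi(|G|))$ need not equal $1$. Concretely, $C_4$ and $C_{p^2}$ satisfy (2). The brace on $\Z/p^2\Z$ given by $a\circ b=a+b+pab$ has cyclic multiplicative group of order $p^2$ (for $p$ odd) and gamma function $\gamma_a(b)=(1+pa)b$, which is nontrivial; and the brace $(\Z/2\Z\times\Z/2\Z,+,\circ)$ appearing in the paper's proof of Theorem \ref{thm:N} has multiplicative group $C_4$, additive group $C_2\times C_2$, and nontrivial gamma function --- the latter also refutes your auxiliary claim that the coprimality condition forces the additive group to be cyclic isomorphic to $G$. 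In these examples the Hopf--Galois correspondence is still bijective, but not because the structure is classical: it is because every subgroup of the multiplicative group happens to be a left ideal (the unique subgroup of each order being characteristic in the additive group). The correct route, as in \cite{ST} and in the paper's treatment of the cyclic case of Theorem \ref{thm:N}, is to prove that the additive and multiplicative groups are isomorphic (this is where the classification results of Rump and Tsang and the hypothesis $q\nmid p-1$ enter, via the structure of groups whose Sylow subgroups are all cyclic) and then to invoke the counting criterion of Remark \ref{ex:char subgp}: every characteristic subgroup of the additive group is a left ideal, and in a cyclic group every subgroup is characteristic. Note also that your ``trivial on each Sylow subgroup'' step fails numerically: $q\nmid p-1$ controls only the prime-to-$p$ part of $\Aut(C_{p^k})$, whose order is $p^{k-1}(p-1)$, so nontrivial actions of $p$-power order survive whenever $k\geq 2$.
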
 

In this paper, we shall fix the type $N$ of the Hopf--Galois structures instead and look for conditions under which the Hopf--Galois correspondence is bijective for every Hopf--Galois structure of type $N$ on any Galois extension. Clearly such a group $N$ is necessarily Hamiltonian or abelian, as one can see by considering the canonical nonclassical structure $H_\lambda$  on an $N$-Galois extension. We are able to obtain a complete classification and below is our main result. Note that the criteria on $N$ are similar to but slightly different from those on $G$ in Theorem \ref{thm:G}.

\begin{theorem}\label{thm:N} For any finite group $N$, the following are equivalent:
\begin{enumerate}
\item The Hopf--Galois correspondence is bijective for every Hopf--Galois structure of type $N$ on any finite Galois extension.
\item The group $N$ is isomorphic to $C_2$ or $C_2\times C_2$, or is cyclic of odd order and $q\nmid p-1$ for all prime divisors $p,q$ of $|N|$.
\end{enumerate}
\end{theorem}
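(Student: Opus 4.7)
The plan is to translate statement~(1) into the language of skew braces via the Stefanello--Trappeniers correspondence and then handle both directions by case analysis.

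By the new correspondence of the first-named author and Trappeniers, a Hopf--Galois structure of type $N$ on a $G$-Galois extension $L/K$ corresponds to a skew brace $(B,\cdot,\circ)$ with $(B,\cdot)\cong G$ and $(B,\circ)\cong N$, in such a way that bijectivity of the Hopf--Galois correspondence translates into an explicit condition on the left ideals of $B$. Applied to the skew brace associated to the canonical nonclassical structure $H_\lambda$ on an $N$-Galois extension, the condition reduces to every subgroup of $N$ being normal, so statement~(1) already forces $N$ to be abelian or Hamiltonian.

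For the remaining necessity, we must exclude non-abelian Hamiltonian $N$ (necessarily containing a $Q_8$ direct factor), non-cyclic abelian $N$ other than $C_2\times C_2$, cyclic $N$ of even order greater than $2$, and cyclic $N$ of odd order violating the gcd condition. For each of these, I will produce an explicit counterexample skew brace with $(B,\circ)\cong N$ coming from a suitably chosen regular embedding $N\hookrightarrow\mathrm{Hol}(H)$, in which the Hopf--Galois correspondence of the associated structure fails to be surjective. These constructions are the natural analogues, with the roles of circle group and additive group interchanged, of those appearing in \cite{ST} for the necessity direction of Theorem~\ref{thm:G}. For the sufficiency direction, the case $N=C_2$ is immediate since the only skew brace of order $2$ is trivial, and $N=C_2\times C_2$ can be handled by directly enumerating the finitely many skew braces whose circle group is $N$. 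The main case is $N$ cyclic of odd order $n$ with $q\nmid p-1$ for all prime divisors $p,q$ of $n$. Here the strategy is to prove the structural claim that for any skew brace $B$ with $(B,\circ)\cong N$, the additive group $(B,\cdot)$ must also be cyclic of order $n$. Granted this, $(B,\cdot)$ satisfies the hypothesis of Theorem~\ref{thm:G} and hence that theorem applied to $G=(B,\cdot)$ immediately yields bijectivity.

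The main obstacle is precisely the structural claim just mentioned: for $N$ cyclic of odd order $n$ satisfying the gcd condition, the only group of order $n$ whose holomorph contains a regular subgroup isomorphic to $N$ is $N$ itself. For squarefree $n$ the gcd condition forces $\gcd(n,\phi(n))=1$, so every group of order $n$ is already cyclic and the conclusion is immediate. For non-squarefree $n$ (with the prototypical case $N=C_{p^a}$, $a\geq 2$ and $p$ an odd prime), the argument must proceed prime by prime: one has to show that the Sylow-$p$ subgroup of the automorphism group of any non-cyclic abelian group of order $p^a$ has exponent strictly less than $p^a$, which prevents the holomorph from admitting the regular cyclic element of order $p^a$ needed to realise $N$. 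Once this Sylow analysis is carried out and the primes are stitched together using the gcd condition, the reduction to Theorem~\ref{thm:G} closes the proof.
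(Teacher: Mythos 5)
There is a genuine error at the very first step: you have the dictionary between skew braces and Hopf--Galois structures backwards. Under the Stefanello--Trappeniers correspondence (Theorem \ref{thm:corr1}), the Galois group of $L/K$ is the \emph{multiplicative} group $(G,\circ)$, and the structure associated to an operation $\cdot$ is $L[G_\cdot]^{G_\circ}$, whose type is the \emph{additive} group $(G,\cdot)$ since $L\otimes_K L[G_\cdot]^{G_\circ}\simeq L[G_\cdot]$. So ``Hopf--Galois structure of type $N$'' means you must fix the additive group isomorphic to $N$ and let the circle group (the Galois group) vary --- the opposite of your ``$(B,\cdot)\cong G$ and $(B,\circ)\cong N$''. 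This flip is not cosmetic, because there is no duality exchanging the two groups of a skew brace, and the two resulting classification problems have different answers (that is exactly why Theorem \ref{thm:N} differs from Theorem \ref{thm:G}). Concretely: $\mathrm{Hol}(C_4)\simeq D_8$ contains a regular subgroup isomorphic to $C_2\times C_2$, giving a skew brace with additive group $C_4$ and circle group $C_2\times C_2$; the associated structure has type $C_4$ (witnessing that $C_4$ is a bad type, as in Example \ref{ex:2n}), but under your convention you would label it as type $C_2\times C_2$ and wrongly conclude that $C_2\times C_2$ fails condition (1).

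The flip propagates through both directions of your argument. For necessity, the counterexamples must be skew braces with \emph{additive} group $N$ in which some subgroup of the circle group fails to be a left ideal; these are not obtained by ``interchanging the roles'' in the examples from \cite{ST} (the opposite-brace operation replaces $\cdot$ by $\cdot^{\mathrm{op}}$, it does not swap $\cdot$ with $\circ$), and they have to be built afresh, as in Section \ref{sec:ex}. For sufficiency, the structural claim you actually need is: if $(N,\cdot)$ is cyclic of odd order with the gcd condition, then $(N,\circ)$ is forced to be cyclic --- which follows from Rump's classification of braces with cyclic \emph{additive} group (the multiplicative group is a $C$-group, hence $C_e\rtimes C_d$, and the gcd condition kills the twist). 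Your proposed holomorph/Sylow analysis instead addresses the transposed question (which additive groups pair with a given cyclic circle group), which is a different theorem with a different proof and is not what bijectivity of the correspondence for structures of type $N$ requires. Once the correct structural claim is in hand, your idea of finishing by citing Theorem \ref{thm:G} applied to the (now known to be cyclic) Galois group would work and is a reasonable alternative to the paper's use of the characteristic-subgroup observation in Remark \ref{ex:char subgp}.
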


To prove Theorem \ref{thm:N}, we shall follow the strategy of \cite{ST} and use the connection between Hopf--Galois structures and skew braces (to be recalled in Section \ref{sec:skew braces}).

\section{Connection with skew braces}\label{sec:skew braces}

Skew braces are an algebraic structure introduced by Guarnieri and Vendramin \cite{GV} as a tool to study the non-degenerate set-theoretic solutions to the Yang--Baxter equation. They are a generalisation of braces as introduced by Rump \cite{Rump0}.

\begin{definition} A \emph{skew (left) brace} is a set $B = (B,\cdot,\circ)$ equipped with two group operations $\cdot$ and $\circ$ such that the so-called brace relation
\[ a\circ (b\cdot c) = (a\circ b)\cdot a^{-1}\cdot (a\circ c)\]
holds for all $a,b,c\in B$. For $a\in B$, we write $a^{-1}$ for the inverse of $a$ in $(B,\cdot)$ and $\overline{a}$ for the inverse of $a$ in $(B,\circ)$. One calls $(B,\cdot)$ and $(B,\circ)$, respectively, the \emph{additive group} and \emph{multiplicative group} of the skew brace $B$. They share the same identity element, which we denote by $1$. A \emph{brace} is a skew brace with abelian additive group.
\end{definition}

\begin{definition}    
Given any skew brace $B = (B,\cdot,\circ)$, for each $a\in B$, it is easy to verify, using the brace relation, that the map
\[ \gamma_a \colon B \longrightarrow B,\quad \gamma_a(b) = a^{-1}\cdot (a\circ b)\]
is an automorphism on $(B,\cdot)$. It is also well-known \cite{GV} that
\[ \gamma \colon (B,\circ) \longrightarrow \Aut(B,\cdot),\quad \gamma(a) = \gamma_a\]
is a group homomorphism, which we shall refer to as the \emph{gamma function} of $B$.
\end{definition}

There are two obvious ways to construct a skew brace from a group $(B,\cdot)$---the \emph{trivial} skew brace $(B,\cdot,\cdot)$, and the \emph{almost trivial} skew brace $(B,\cdot,\cdot^{\mbox{\tiny op}})$, where $\cdot^{\mbox{\tiny op}}$ denotes the opposite operation of $\cdot$ that is defined by $a\cdot^{\mbox{\tiny op}}b = b\cdot a$ for all $a,b\in B$. Note that they give the same skew brace if and only if $(B,\cdot)$ is abelian.

\vspace{2mm}

It is known that both Hopf--Galois structures \cite[Chapter 2]{Childs book} and skew braces \cite[Section 4]{GV} are related to regular subgroups of the holomorph of a group, so there is a connection between Hopf--Galois structures and skew braces \cite[Appendix]{SV}. Childs \cite{GC2} used this connection to translate the Hopf--Galois correspondence problem to the study of certain substructures of the associated skew brace. But the substructures that appeared are not natural objects that one usually finds in the theory of skew braces. Motivated by this, the first-named author and Trappeniers \cite{ST} proposed a new version of the connection; an important idea was to use opposite skew braces as defined by Koch and Truman \cite{opposite}. This new version of the connection \cite[Theorem 3.1]{ST}, to be recalled below, is explicit and is bijective.
 
\begin{theorem}\label{thm:corr1} Let $L/K$ be a finite Galois extension of fields with Galois group $(G,\circ)$. Then there exists a bijective correspondence between
\begin{enumerate}
\item the operations $\cdot$ such that $(G,\cdot,\circ)$ is a skew brace, and
\item the Hopf--Galois structures on $L/K$ (up to isomorphism).
\end{enumerate}
Specifically, such an operation $\cdot$ is associated to the Hopf--Galois structure
\[ L[G_\cdot]^{G_\circ} = \left\{\sum_{\tau\in G}\ell_\tau\tau \in L[G_\cdot] \colon \sigma(\ell_\tau) = \ell_{\gamma_{\sigma}(\tau)}\mbox{ for all }\sigma,\tau\in G\right\}\]
whose action on $L$ is defined by
\begin{equation}\label{star action} \left(\sum_{\tau\in G}\ell_\tau\tau\right)\star x = \sum_{\tau\in G} \ell_\tau \tau(x)\quad\mbox{for all }x\in L.
\end{equation}
Here we are writing $G_\cdot = (G,\cdot)$ and $G_\circ = (G,\circ)$ for simplicity, and $\gamma$ denotes the gamma function of $(G,\cdot,\circ)$.
\end{theorem}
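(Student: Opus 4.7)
The plan is to construct the bijection in both directions and verify the structural properties, following the Stefanello--Trappeniers approach with its built-in use of opposite skew braces.

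For the direction $(1) \to (2)$, given a skew brace operation $\cdot$ on $G$, I would first check that the rule $\sigma \cdot \sum_\tau \ell_\tau \tau = \sum_\tau \sigma(\ell_\tau)\gamma_\sigma(\tau)$ defines a $\sigma$-semilinear action of $G_\circ$ on the $L$-Hopf algebra $L[G_\cdot]$ by Hopf algebra automorphisms; multiplicativity of the action reduces to the fact that $\gamma \colon G_\circ \to \Aut(G_\cdot)$ is a group homomorphism, while compatibility with the group algebra multiplication and the grouplike comultiplication $\Delta(\tau) = \tau \otimes \tau$ is automatic from $\gamma_\sigma \in \Aut(G_\cdot)$. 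By Galois descent, the fixed ring $H = L[G_\cdot]^{G_\circ}$ is then a $K$-Hopf algebra with $L \otimes_K H \simeq L[G_\cdot]$ as $L$-Hopf algebras, so in particular $\dim_K H = |G|$ and $H$ has type $(G,\cdot)$.

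Next I would show that formula (\ref{star action}) makes $L$ into an $H$-module algebra: each $\tau \in G_\circ$ acts on $L$ as a field automorphism, so $\tau \star (xy) = \tau(x)\tau(y)$ is compatible with $\Delta(\tau) = \tau \otimes \tau$, and $\tau \star 1 = 1 = \varepsilon(\tau)$. For the crucial bijectivity of $L \otimes_K H \to \End_K(L)$, tensoring up to $L$ reduces the statement to the map $L[G_\cdot] \to \End_K(L)$ sending $\tau$ to its Galois action, which is the classical isomorphism $L \otimes_K K[G_\circ] \simeq \End_K(L)$ arising from the Galois property of $L/K$.

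For the reverse direction and bijectivity, I would use Greither--Pareigis to pass from a Hopf--Galois structure to a regular subgroup $N$ of $\Perm(G)$ normalized by $\lambda(G_\circ)$, transport the group structure of $N$ to $G$ via the evaluation bijection $\eta \mapsto \eta^{-1}(1_G)$ to obtain the operation $\cdot$, and identify the conjugation action of $\lambda(G_\circ)$ on $N$ with the gamma function of $(G,\cdot,\circ)$. Showing that the two procedures are mutually inverse then amounts to matching the classical Greither--Pareigis description of $L[N]^G$ with the explicit formula in the statement.

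The principal obstacle is getting the conventions exactly right. Using $\eta \mapsto \eta(1_G)$ in place of $\eta \mapsto \eta^{-1}(1_G)$ would yield the opposite skew brace, so only the correct choice produces the clean form of the theorem; similarly, the choice between left and right regular representations in the Greither--Pareigis setup affects the shape of the gamma function that appears. This is precisely where the opposite skew brace construction of Koch--Truman enters, and is the ingredient that distinguishes this bijection from the more traditional one used in \cite{SV,GC2}.
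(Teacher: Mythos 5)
The paper you were given does not actually prove this statement: Theorem \ref{thm:corr1} is imported verbatim from Stefanello--Trappeniers \cite[Theorem 3.1]{ST}, so the only meaningful comparison is with the argument in that reference. Your outline follows the same strategy --- Galois descent from the semilinear action $\sigma\cdot\sum_\tau\ell_\tau\tau=\sum_\tau\sigma(\ell_\tau)\gamma_\sigma(\tau)$ in one direction, Greither--Pareigis plus transport of structure along $\eta\mapsto\eta^{-1}(1_G)$ in the other --- and you correctly identify that choosing $\eta\mapsto\eta^{-1}(1_G)$ rather than $\eta\mapsto\eta(1_G)$ is exactly the Koch--Truman ``opposite'' twist that produces the clean action (\ref{star action}) and distinguishes this correspondence from the one in \cite{SV,GC2}.

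There is, however, one step where your verification is too thin, and it is precisely the step where the skew brace axiom does its work. Observing that each $\tau$ acts on $L$ as a field automorphism only checks compatibility of $\star$ with $\Delta$ and $\varepsilon$; it does not show that $\star$ is an action of the \emph{algebra} $H$, i.e.\ that $(hh')\star x=h\star(h'\star x)$. Note that $\tau\mapsto(x\mapsto\tau(x))$ is \emph{not} a ring homomorphism from $L[G_\cdot]$ to $\End_K(L)$: multiplication in the source uses $\cdot$, while composing the operators composes the $\circ$-action. It only becomes multiplicative after restriction to the descended subalgebra $H$. Concretely, for $h=\sum_\tau\ell_\tau\tau$ and $h'=\sum_{\tau'}\ell'_{\tau'}\tau'$ in $H$ one finds $h\star(h'\star x)=\sum_{\tau,\tau'}\ell_\tau\,\tau(\ell'_{\tau'})\,(\tau\circ\tau')(x)$, and one must invoke the descent relation $\tau(\ell'_{\tau'})=\ell'_{\gamma_\tau(\tau')}$ together with the identity $\tau\circ\gamma_\tau^{-1}(\mu)=\tau\cdot\mu$ (a reformulation of the brace relation) to reindex this as $\sum_{\tau,\mu}\ell_\tau\ell'_\mu\,(\tau\cdot\mu)(x)=(hh')\star x$. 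Alternatively, and closer to what \cite{ST} actually does, you can get the module structure for free by carrying out in full the matching you defer to your last paragraph: check that $\tau\mapsto(x\mapsto x\cdot\tau^{-1})$ embeds $G_\cdot$ as a regular subgroup $\rho_\cdot(G)$ of $\Perm(G)$ normalized by $\lambda(G_\circ)$, with conjugation by $\lambda(\sigma)$ acting as $\gamma_\sigma$ (this is where the brace relation enters, and where one also verifies in the converse direction that the transported operation really satisfies the brace relation); then $L[G_\cdot]^{G_\circ}$ with action (\ref{star action}) is literally the Greither--Pareigis structure $L[N]^{G}$ for $N=\rho_\cdot(G)$. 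Either way the gap is routine to fill; the rest of your outline is sound.
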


\begin{example} For the operation $\cdot$ in Theorem \ref{thm:corr1}, the choice $\circ$ (corresponding to the trivial skew brace) is associated to the classical structure $H_\rho$ on $L/K$. Similarly, the choice $\circ^{\mbox{\tiny op}}$ (corresponding to the almost trivial skew brace) is associated to the canonical nonclassical structure $H_\lambda$ on $L/K$. 
\end{example}

Under this new version of the connection, the Hopf--Galois correspondence problem may now be rephrased in terms of left ideals of the associated skew brace. Left ideals are a natural algebraic substructure that appears in the study of skew braces.

\begin{definition} Let $B = (B,\cdot,\circ)$ be a skew brace with gamma function $\gamma$. A \emph{left ideal} of $B$ is a subgroup $I$ of $(B,\cdot)$ for which $\gamma_a(I) \subseteq I$ for all $a\in B$. In this case, one easily checks that $I$ is automatically a subgroup of $(B,\circ)$.
\end{definition}

The following result is from \cite[Corollary 4.1]{ST}.

\begin{theorem}\label{thm:corr2} Let $L/K$ be a finite Galois extension of fields with Galois group $(G,\circ)$. Let $\cdot$ be any operation such that $(G,\cdot,\circ)$ is a skew brace, and let $L[G_\cdot]^{G_\circ}$ be the associated Hopf--Galois structure. For any subgroup $G'$ of $(G,\circ)$, the following are equivalent:
\begin{enumerate}
\item $G'$ is a left ideal of $(G,\cdot,\circ)$;
\item $L^{G'}$ lies in the image of the Hopf--Galois correspondence for $L[G_\cdot]^{G_\circ}$.
\end{enumerate}
In particular, the Hopf--Galois correspondence for $L[G_\cdot]^{G_\circ}$ is bijective if and only if every subgroup of $(G,\circ)$ is a left ideal of $(G,\cdot,\circ)$.
\end{theorem}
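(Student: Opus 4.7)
The plan is to combine Galois descent on the Hopf-algebra side with a dimension count on the fixed-field side. By construction, $H := L[G_\cdot]^{G_\circ}$ is the subalgebra of $L[G_\cdot]$ fixed under the semi-linear $G_\circ$-action
\[ \sigma \cdot \Big(\sum_\tau \ell_\tau \tau\Big) = \sum_\tau \sigma(\ell_\tau)\, \gamma_\sigma(\tau); \]
indeed, the fixed-point condition $\sigma(\ell_\tau) = \ell_{\gamma_\sigma(\tau)}$ from Theorem \ref{thm:corr1} is exactly invariance under this action. Standard Galois descent then yields a bijection between $K$-sub-Hopf-algebras of $H$ and those $L$-sub-Hopf-algebras of $L[G_\cdot]$ that are stable under this semi-linear action, with the map sending a stable subalgebra to its $G_\circ$-fixed points. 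Since the $L$-sub-Hopf-algebras of $L[G_\cdot]$ are exactly the group subalgebras $L[G'']$ for subgroups $G'' \leq G_\cdot$, and stability amounts to $\gamma_\sigma(G'') \subseteq G''$ for every $\sigma \in G$, this gives a bijection
\[ \{\text{left ideals } G'' \text{ of } (G,\cdot,\circ)\} \longleftrightarrow \{K\text{-sub-Hopf-algebras of } H\}, \qquad G'' \longmapsto H_{G''} := L[G'']^{G_\circ}. \]

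Next I would identify the intermediate field $L^{H_{G''}}$ attached to such a sub-Hopf-algebra. Since a left ideal $G''$ is automatically a subgroup of $G_\circ$, the field $L^{G''}$ is well defined, with $[L : L^{G''}] = |G''|$ by ordinary Galois theory. The inclusion $L^{G''} \subseteq L^{H_{G''}}$ is immediate from \eqref{star action}: for $x \in L^{G''}$ and any element $\sum_{\tau \in G''} \ell_\tau \tau$ of $H_{G''}$, each $\tau \in G''$ fixes $x$ under the Galois action, so
\[ \Big(\sum_{\tau \in G''} \ell_\tau \tau\Big) \star x = \Big(\sum_{\tau \in G''} \ell_\tau\Big) x = \varepsilon\Big(\sum_{\tau \in G''} \ell_\tau \tau\Big) x. \]
For the reverse inclusion I would invoke the standard identity $[L : L^{H'}] = \dim_K H'$, valid for any $K$-sub-Hopf-algebra $H'$ of a Hopf--Galois structure, together with $\dim_K H_{G''} = \dim_L L[G''] = |G''|$; these force $[L : L^{H_{G''}}] = |G''| = [L : L^{G''}]$, and hence equality of the two fields. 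In other words, under the bijection above, the Hopf--Galois correspondence for $H$ is carried to the map $G'' \mapsto L^{G''}$ from left ideals of $(G,\cdot,\circ)$ to intermediate fields of $L/K$.

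The equivalence (1) $\Leftrightarrow$ (2) now follows from the injectivity of the Galois-theoretic correspondence $G' \mapsto L^{G'}$ on subgroups of $G_\circ$: an intermediate field $L^{G'}$ lies in the image of the Hopf--Galois correspondence iff it equals $L^{G''}$ for some left ideal $G''$, iff $G' = G''$ itself is a left ideal. The \emph{in particular} clause is then immediate: by Chase--Sweedler injectivity, bijectivity of the Hopf--Galois correspondence is equivalent to its surjectivity onto the full set of intermediate fields, which, via Galois theory, is equivalent to every subgroup of $G_\circ$ being a left ideal. I expect the main obstacle to be the Galois-descent step: one must carefully verify that the semi-linear action described above really realises $L \otimes_K H \simeq L[G_\cdot]$ and that descent identifies $K$-sub-Hopf-algebras of $H$ with $G_\circ$-stable $L$-sub-Hopf-algebras of $L[G_\cdot]$. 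Once this is in place, the rest of the argument is a short direct inclusion combined with a standard dimension count.
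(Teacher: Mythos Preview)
The paper does not itself prove this theorem; it simply quotes it from \cite[Corollary~4.1]{ST}. Your argument is correct and is the standard one underlying that reference: Galois descent identifies the $K$-sub-Hopf-algebras of $H$ with the $\gamma$-stable subgroups of $(G,\cdot)$, i.e.\ the left ideals, and the fixed-field identification $L^{H_{G''}}=L^{G''}$ follows from the easy inclusion together with the dimension formula $[L:L^{H'}]=\dim_K H'$.
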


\begin{remark}\label{ex:order}In the setting of Theorem \ref{thm:corr2}, if the number of subgroups of $(G,\circ)$ of a given order is larger than that of $(G,\cdot)$, then the subgroups of $(G,\circ)$ cannot all be left ideals of $(G,\cdot,\circ)$.  Hence, the Hopf--Galois correspondence for $L[G_\cdot]^{G_\circ}$ is not bijective in this case.
\end{remark}

\begin{remark}\label{ex:subgp}In the setting of Theorem \ref{thm:corr2}, if the number of subgroups of $(G,\cdot)$ equals that of $(G,\circ)$, but there is some subgroup of $(G,\cdot)$ that is not a subgroup of $(G,\circ)$, then the subgroups of $(G,\circ)$ cannot all be left ideals of $(G,\cdot,\circ)$. Hence, the Hopf--Galois correspondence for $L[G_\cdot]^{G_\circ}$ is not bijective in this case.
\end{remark}

\begin{remark}\label{ex:char subgp}In the setting of Theorem \ref{thm:corr2}, if the number of characteristic subgroups of $(G,\cdot)$ equals that of subgroups of $(G,\circ)$, then the subgroups of $(G,\circ)$ are all left ideals of $(G,\cdot,\circ)$, as noted in \cite[Proposition 4.19]{ST}. This follows from the obvious fact that every characteristic subgroup of $(G,\cdot)$ is a left ideal of $(G,\cdot,\circ)$. Hence, the Hopf--Galois correspondence for $L[G_\cdot]^{G_\circ}$ is bijective in this case. 
\end{remark}

By Theorem \ref{thm:corr2} and the fact that every finite group arises as the Galois group of some finite Galois extension of fields, the proof of Theorem \ref{thm:N} may be reduced to the following skew-brace-theoretic problem.

\begin{problem}\label{problem} Classify the finite groups $N = (N,\cdot)$ satisfying the following condition: Every subgroup of $(N,\circ)$ is a left ideal of $(N,\cdot,\circ)$ for all operations $\circ$ such that $(N,\cdot,\circ)$ is a skew brace. \end{problem}

For simplicity, let us say that a finite group $N = (N,\cdot)$ is \emph{good} if it satisfies the condition stated in Problem \ref{problem}, and \emph{bad} otherwise.

\vspace{2mm}

We end this section with a useful lemma (cf. \cite[Lemma 4.23]{ST}). 

\begin{lemma}\label{lem:direct prod} Let $N = (N,\cdot)$ be a finite group. If $N = M\times M'$ is a direct product of two subgroups $M,M'$ for which $M$ is bad, then $N$ is also bad. 
\end{lemma}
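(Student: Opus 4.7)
The plan is to take a witness for the badness of $M$ and extend it to one for $N$ using a direct product skew brace construction.

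By hypothesis, there exists an operation $\circ_M$ such that $(M,\cdot,\circ_M)$ is a skew brace admitting a subgroup $H$ of $(M,\circ_M)$ that fails to be a left ideal, i.e.\ $\gamma_a(H) \not\subseteq H$ for some $a \in M$, where $\gamma$ is the gamma function of $(M,\cdot,\circ_M)$. On $M'$, I would put the trivial skew brace structure $(M',\cdot,\cdot)$ (any skew brace structure on $M'$ would do, but the trivial one is simplest). Form the direct product skew brace $(N,\cdot,\circ)$ on $N = M \times M'$, where $\circ$ is defined componentwise: $(a,a')\circ(b,b') = (a\circ_M b,\, a'\cdot b')$. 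It is routine to check that this satisfies the brace relation and has additive group equal to the given $N = M\times M'$.

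Next I would identify a subgroup of $(N,\circ)$ that fails to be a left ideal. The natural candidate is $\widetilde{H} = H \times \{1\}$, which is clearly a subgroup of $(N,\circ)$ since $H$ is a subgroup of $(M,\circ_M)$ and $1$ is the identity in $(M',\cdot)$. The gamma function $\widetilde{\gamma}$ of the product skew brace is given by $\widetilde{\gamma}_{(a,a')}(b,b') = (\gamma_a(b),\, \gamma'_{a'}(b'))$, where $\gamma'$ denotes the gamma function of $(M',\cdot,\cdot)$; note $\gamma'_{a'} = \id$ for all $a'$ in the trivial case, and in any case $\gamma'_{a'}(1) = 1$. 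Hence $\widetilde{\gamma}_{(a,1)}(h,1) = (\gamma_a(h),1)$, so $\widetilde{H}$ is a left ideal of $(N,\cdot,\circ)$ if and only if $H$ is a left ideal of $(M,\cdot,\circ_M)$. Since the latter fails by assumption, $\widetilde{H}$ witnesses that $N$ is bad.

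There is no real obstacle here: the only things to verify are that the componentwise operation $\circ$ really defines a skew brace and that the gamma function of a direct product is componentwise. Both are standard and reduce to checking the brace relation coordinate by coordinate. The argument mirrors that of \cite[Lemma 4.23]{ST}, the only subtlety being to make sure we are embedding $H$ into the first factor (where the nontrivial $\circ$-operation lives) rather than the second.
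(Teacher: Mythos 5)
Your proposal is correct and follows essentially the same route as the paper: equip $M'$ with the trivial skew brace, form the componentwise product skew brace on $N=M\times M'$, and observe that $S\times\{1\}$ (your $\widetilde{H}$) is a subgroup of $(N,\circ)$ that fails to be a left ideal because the gamma function acts componentwise. The extra verification of the componentwise gamma function is a detail the paper leaves implicit but is exactly the intended argument.
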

\begin{proof} The hypothesis means that there exists an operation $\circ$ for which $(M,\cdot,\circ)$ is a skew brace and some subgroup $S$ of $(M,\circ)$ is not a left ideal of $(M,\cdot,\circ)$. Then clearly $(N,\cdot,\circ)$ is also a skew brace, where we define
\[ (m_1,m_1')\circ (m_2,m_2') = (m_1\circ m_2,m_1'\cdot m_2')\]
for all $m_1,m_2\in M$ and $m_1',m_2'\in M'$. It is also obvious that $S\times \{1\}$ is a subgroup of $(N,\circ)$ that is not a left ideal of $(N,\cdot,\circ)$. Thus, indeed $N$ is bad.
\end{proof}

\section{Examples of bad groups}\label{sec:ex}

In this section, we shall give some examples of bad groups and explicitly construct, via skew braces, Hopf--Galois structures (the action is understood to be the one given by (\ref{star action})) for which the Hopf--Galois correspondence is not bijective. We shall need them for the proof of (1) $\Rightarrow$ (2) of Theorem \ref{thm:N}.

\vspace{2mm}

The first two examples are $Q_8$ and $C_2^3$. We remark that skew braces of order $8$ (and more generally order $p^3$ for a prime $p$) were enumerated by Nejabati Zenouz \cite{NZ}, and those that are braces were previously classified by Bachiller \cite{p3}.

\begin{example}\label{ex:Q8} Consider the quaternion group $Q_8=(Q_8,\cdot)$ with presentation
\[ Q_8 = \langle \sigma, \tau \mid \sigma^4=1, \ \sigma^2=\tau^2,\ \tau\sigma\tau^{-1}=\sigma^{-1}\rangle.\]
Let $\psi \colon Q_8\longrightarrow Q_8$ denote the automorphism defined by
\[ \psi(\sigma) = \sigma,\quad \psi(\tau) =\sigma\tau. \]
Since $[Q_8,Q_8] = Z(Q_8)$, clearly $\psi[Q_8,Q_8] \subseteq Z(Q_8)$ holds. We can then define
\[ \delta\circ \delta' = \delta\cdot \psi(\delta)\cdot \delta'\cdot \psi(\delta)^{-1}\]
for all $\delta,\delta'\in Q_8$, and this yields a skew brace $(Q_8,\cdot,\circ)$ by \cite[Theorem 1.2]{endo}. It is routine, once we note that
\[ \sigma \tau\sigma^{-1}=\tau^{-1},\quad
(\sigma\tau)\sigma(\sigma\tau)^{-1}=\sigma^{-1},\quad (\sigma\tau)\tau(\sigma\tau)^{-1}=\tau^{-1},\]
to explicitly compute that 
\[ \sigma^{i}\tau^{j}\circ \sigma^{r}\tau^{s}
= \sigma^{i+r}\tau^{j+(-1)^{i+j}s}\]
for all $i,j,r,s\in \Z$. Note that $(Q_8,\circ)\simeq D_8$ is also generated by $\sigma,\tau$ because
\[ \underbrace{\sigma\circ\cdots \circ\sigma}_{\mbox{\tiny $4$ times}} = \sigma^4 =1,\quad \tau\circ\tau = 1,\quad \tau\circ\sigma\circ\tau = \sigma^{-1} = \overline{\sigma}.\]
Since $D_8$ has more subgroups of order $2$ than $Q_8$, not all subgroups of $(Q_8,\circ)$ are left ideals of $(Q_8,\cdot,\circ)$ as noted in Remark \ref{ex:order}. Thus the group $Q_8$ is bad. 

\vspace{2mm}

We can construct the associated Hopf--Galois structure explicitly: observe that
\[ \gamma_\sigma(\sigma^i\tau^j) = \sigma^i\tau^{-j} = \sigma^{i+2}\tau^{2-j},\quad \gamma_\tau(\sigma^i\tau^j) = \sigma^{-i}\tau^{-j}=\sigma^{-i+2}\tau^{2-j}\]
for all $i,j\in \Z$. This means that if $L/K$ is a $D_8$-Galois extension and we identify its Galois group with $(Q_8,\circ)$, then we obtain the Hopf--Galois structure 
\[ H = \left\{
\sum_{i\in \mathbb{Z}/4\mathbb{Z}}\sum_{j\in\{0,1\}}\ell_{i,j}\sigma^i\tau^j\in L[Q_8]\colon
\begin{array}{c}
\sigma(\ell_{i,0}) = \ell_{i,0},\, \sigma(\ell_{i,1}) = \ell_{i+2,1},\\
\tau(\ell_{i,0}) = \ell_{-i,0},\, \tau(\ell_{i,1}) = \ell_{-i+2,1}\mbox{ for all }i
\end{array}
\right\}\]
of type $Q_8$ for which the Hopf--Galois correspondence is not bijective.
\end{example}

\begin{example}\label{ex:C2C2C2}
Consider the elementary abelian $2$-group $C_2^3=(C_2^3,\cdot)$ of rank $3$ with presentation
\[ C_2^3 = \langle \sigma,\tau,\upsilon \mid \sigma^2 = \tau^2 = \upsilon^2=1, \ \sigma\tau=\tau\sigma,\ \sigma\upsilon=\upsilon\sigma,\ \tau\upsilon=\upsilon\tau \rangle. \]
We know from \cite[Theorem 3.1]{p3} that by defining
\[ \sigma^{i}\tau^{j}\upsilon^{k} \circ \sigma^{r}\tau^{s}\upsilon^{t} = \sigma^{i+r+jt+ks}\tau^{j+s}\upsilon^{k+t} \]
for all $i,j,k,r,s,t\in \Z$, we get a brace $(C_2^3,\cdot,\circ)$. It is easy to see that $(C_2^3,\circ)\simeq C_2^3$ is also generated by $\sigma,\tau,\upsilon$. Since $\langle \tau,\upsilon\rangle$ is a subgroup of $(C_2^3,\cdot)$ but not of $(C_2^3,\circ)$, not all subgroups of $(C_2^3,\circ)$ are left ideals of $(C_2^3,\cdot,\circ)$ by Remark \ref{ex:subgp}. This shows that the group $C_2^3$ is bad.

\vspace{2mm}

We can construct the associated Hopf--Galois structure explicitly: observe that
\[ \gamma_\sigma(\sigma^i\tau^j\upsilon^k)=\sigma^i\tau^j\upsilon^k,
\quad \gamma_\tau(\sigma^i\tau^j\upsilon^k) = \sigma^{i+k}\tau^j\upsilon^k,
\quad \gamma_\upsilon(\sigma^i\tau^j\upsilon^k) = \sigma^{i+j}\tau^j\upsilon^k\]
for all $i,j,k\in \Z$. This means that if $L/K$ is a $C_2^3$-Galois extension and we identify its Galois group with $(C_2^3,\circ)$, then we obtain the Hopf--Galois structure 
\[ H = \left\{
\sum_{i,j,k\in \mathbb{Z}/2\mathbb{Z}}\ell_{i,j,k}\sigma^i\tau^j\upsilon^k\in L[C_2^3]\colon
\begin{array}{cc}
\sigma(\ell_{i,j,k}) = \ell_{i,j,k},\ \tau(\ell_{i,j,k})=\ell_{i+k,j,k},
\\[3pt]
\upsilon(\ell_{i,j,k})=\ell_{i+j,j,k}\mbox{ for all }i,j,k
\end{array}
\right\}\]
of type $C_2^3$ for which the Hopf--Galois correspondence is not bijective.
\end{example}

The next two examples concern cyclic groups. We remark that for finite braces with cyclic additive group, the isomorphism classes of the multiplicative group that can occur were already classified by Rump \cite{Rump}.

\begin{example}\label{ex:2n}Consider the cyclic group $C_n=(C_{n},\cdot)$ of even order $n$ with generator $\sigma$. Since $n$ is even, we can define 
\[ \sigma^i \circ \sigma^j = \sigma^{i+ (-1)^ij}\]
for all $i,j\in \Z$. One easily verifies that $(C_n,\circ)$ is indeed a group and that $(C_n,\cdot,\circ)$ is a brace. Notice that $(C_n,\circ)\simeq D_{n}$ is a dihedral group (with the convention that $D_2=C_2$ and $D_4 = C_2\times C_2)$ generated by $\sigma,\sigma^2$ because
\[ \underbrace{\sigma^2\circ\cdots\circ \sigma^2}_{\mbox{\tiny $n$ times}} = (\sigma^2)^n = 1,\quad
\sigma\circ \sigma =1,\quad
\sigma\circ \sigma^2\circ\sigma = (\sigma^2)^{-1} = \overline{\sigma^2}.\]
Suppose now that $n\geq 4$. Then $D_n$ has more subgroups of order $2$ than $C_n$, so not all subgroups of $(C_n,\circ)$ are left ideals of $(C_n,\cdot,\circ)$ as noted in Remark \ref{ex:order}. It then follows that the group $C_n$ is bad for $n\geq 4$.

\vspace{2mm}

We can construct the associated Hopf--Galois structure explicitly: observe that
\[ \gamma_\sigma(\sigma^i) = \sigma^{-i},\quad
\gamma_{\sigma^2}(\sigma^i) = \sigma^i \]
for all $i\in \Z$. This means that if $L/K$ is a $D_n$-Galois extension and we identify its Galois group with $(C_n,\circ)$, then we obtain the Hopf--Galois structure
\[ H = \left\{
\sum_{i\in\mathbb{Z}/n\mathbb{Z}}\ell_{i}\sigma^i\in L[C_n]\colon
\sigma(\ell_{i}) = \ell_{-i},\ \sigma^2(\ell_{i})=\ell_{i}\mbox{ for all }i
\right\}\]
of type $C_n$ for which the Hopf--Galois correspondence is not bijective. 
\end{example}

\begin{example}\label{ex:pq}Consider the direct product $C_{p^{n}}\times C_{q^m}=(C_{p^{n}}\times C_{q^m},\cdot)$, where $p,q$ are primes for which $q\mid p-1$ and $m,n\geq 1$. Let $C_{p^n} = \langle\sigma\rangle$ and $C_{q^{m}} = \langle\tau\rangle$. Since $q\mid p-1$, there exists $\kappa\in\Z$ such that $\kappa$ mod $p^n$ has multiplicative order $q$. We can then define a natural homomorphism by setting
\[ \varphi\colon C_{q^m} \longrightarrow \Aut(C_{p^n}),\quad \varphi(\tau) = (\sigma\mapsto \sigma^\kappa).\]
It is known, by \cite[Example 1.4]{GV} for example, that by setting
\[ (\delta,\xi) \circ (\delta',\xi') = (\delta\varphi(\xi)(\delta'),\xi\xi') \]
for all $\delta,\delta'\in C_{p^n}$ and $\xi,\xi'\in C_{q^m}$, namely
\[ (C_{p^n} \times C_{q^m},\circ) = C_{p^n}\rtimes_\varphi C_{q^m}\]
is the semidirect product defined by $\varphi$, we obtain a brace $(C_{p^n}\times C_{q^m},\cdot,\circ)$. Clearly $\{1\} \times C_{q^m}$ is a subgroup of $(C_{p^n}\times C_{q^m},\circ)$ but it is not normal because $\varphi$ is non-trivial. Thus $(C_{p^n}\times C_{q^m},\circ)$ has more Sylow $q$-subgroups than $(C_{p^n}\times C_{q^m},\cdot)$, so not all subgroups of $(C_{p^n}\times C_{q^m},\circ)$ are left ideals of $(C_{p^n}\times C_{q^m},\cdot,\circ)$ by Remark \ref{ex:order}. This implies that the group $C_{p^n}\times C_{q^m}$ is bad when $q\mid p-1$.

\vspace{2mm}

We can construct the associated Hopf--Galois structure explicitly: observe that
\[ \gamma_{(\sigma,1)}(\sigma^i,\tau^j) = (\sigma^i,\tau^{j}),\quad
\gamma_{(1,\tau)}(\sigma^i,\tau^j) = (\sigma^{i\kappa},\tau^j)\]
for all $i,j\in \Z$. This means that if $L/K$ is a $(C_{p^n}\rtimes_\varphi C_{q^m})$-Galois extension, then we obtain the Hopf--Galois structure
\[ H = \left\{
\sum_{i\in \mathbb{Z}/p^n\mathbb{Z}}\sum_{j\in\mathbb{Z}/q^m\mathbb{Z}}\ell_{i,j}(\sigma^i,\tau^j)\in L[C_{p^n}\times C_{q^m}]\colon
\begin{array}{c}
\sigma(\ell_{i,j}) = \ell_{i,j}, \\ \tau(\ell_{i,j})=\ell_{i\kappa,j}\mbox{ for all }i,j \end{array}\right\}\]
of type $C_{p^n}\times C_{q^m}$ for which the Hopf--Galois correspondence is not bijective.
\end{example}

Finally, we consider direct products of cyclic $p$-groups for an odd prime $p$.

\begin{example}\label{ex:Zp} Consider the group $C_{p^n}\times C_{p^m}=(C_{p^n}\times C_{p^m},\cdot)$ with presentation
\[ C_{p^n}\times C_{p^m} = \langle \sigma,\tau \mid \sigma^{p^n}=\tau^{p^m}=1,\ \sigma\tau=\tau\sigma\rangle,\]
where $p$ is a prime and $1\leq m\leq n$. Since $m\leq n$, we can define
\[ \sigma^{i}\tau^{j}\circ \sigma^{r}\tau^{s} = \sigma^{i+r}\tau^{j+s+ir}\]
for all $i,j,r,s\in \Z$, and this yields a brace $(C_{p^{n}}\times C_{p^m},\cdot,\circ)$ by a variation of \cite[Example 6.7]{ST2}. Suppose now that $p$ is odd. Then as mentioned in \cite{ST2}, we have
\[ C_{p^n}\times C_{p^m}\simeq (C_{p^n}\times C_{p^m},\circ)\mbox{ via } \sigma^i\tau^j \mapsto \sigma^i\tau^{j + \frac{i(i-1)}{2}}. \]
Clearly $\langle\sigma\rangle$ is a subgroup of $(C_{p^n}\times C_{p^m},\cdot)$ but not of $(C_{p^n}\times C_{p^m},\circ)$, so not all subgroups of $(C_{p^n}\times C_{p^m},\circ)$ are left ideals of $(C_{p^n}\times C_{p^m},\cdot,\circ)$ by Remark \ref{ex:subgp}. We have shown that the group $C_{p^n}\times C_{p^m}$ is bad when $p$ is odd. 

\vspace{2mm}

We can construct the associated Hopf--Galois structure explicitly: observe that
\[ \gamma_{\sigma}(\sigma^i\tau^j) = \sigma^i\tau^{j+i},\quad
\gamma_{\tau}(\sigma^i\tau^j) = \sigma^{i}\tau^j\]
for all $i,j\in \Z$. This means that if $L/K$ is a $(C_{p^n}\times C_{p^m})$-Galois extension and we identify its Galois group with $(C_{p^n}\times C_{p^m},\circ)$, then we obtain the Hopf--Galois structure
\[ H= \left\{
\sum_{i\in \mathbb{Z}/p^n\mathbb{Z}}\sum_{j\in \mathbb{Z}/p^m\mathbb{Z}}\ell_{i,j}\sigma^i\tau^j\in L[C_{p^n}\times C_{p^m}]\colon
\begin{array}{c}
\sigma(\ell_{i,j}) = \ell_{i,j+i}, \\  \tau(\ell_{i,j})=\ell_{i,j}\mbox{ for all }i,j\end{array}
\right\}\]
of type $C_{p^n}\times C_{p^m}$ for which the Hopf--Galois correspondence is not bijective.
\end{example}

\section{Proof of Theorem \ref{thm:N}}

Let $N = (N,\cdot)$ be any finite group. As already noted in Problem \ref{problem}, it follows from Theorem \ref{thm:corr2} that we are reduced to proving the equivalence of the following.
\begin{enumerate}
\item The group $N$ is good, that is to say, every subgroup of $(N,\circ)$ is a left ideal of $(N,\cdot,\circ)$ for all operations $\circ$ such that $(N,\cdot,\circ)$ is a skew brace. 
\item The group $N$ is isomorphic to $C_2$ or $C_2\times C_2$, or is cyclic of odd order and $q\nmid p-1$ for all prime divisors $p,q$ of $|N|$.
\end{enumerate}
Moreover, as already explained in Section \ref{sec:intro}, by considering the almost trivial skew brace $(N,\cdot,\cdot^{\mbox{\tiny op}})$ , which corresponds to the canonical nonclassical structure, we may assume that $N$ is Hamiltonian or abelian. 

\vspace{2mm}

For the implication (1) $\Rightarrow$ (2), observe that:
\begin{itemize}
\item If $N$ is Hamiltonian, then it admits $Q_8$ as a direct factor.
\item If $N$ is elementary $2$-abelian of rank at least $3$, then it admits $C_2\times C_2\times C_2$ as a direct factor.
\item If $N$ is abelian of even order but not elementary $2$-abelian, then it admits $C_{n}$ as a direct factor for some even integer $n\geq 4$.
\item If $N$ is abelian of odd order but not cyclic, then it admits $C_{p^n}\times C_{p^m}$ as a direct factor for some odd  prime $p$ and integers $1\leq m\leq n$.
\item If $N$ is cyclic of odd order and  $q\mid p-1$ for some prime divisors $p,q$ of $|N|$, then it admits $C_{p^n}\times C_{q^m}$ as a direct factor for some integers $m,n\geq 1$.
\end{itemize}
In all cases, we deduce from the examples in Section \ref{sec:ex} that $N$ admits a direct factor that is a bad group. It then follows from Lemma \ref{lem:direct prod} that $N$ is also bad, as desired.

\vspace{2mm}

For the implication (2) $\Rightarrow$ (1), the case $N\simeq C_2$ is obvious because every brace of order $2$ is trivial (or equivalently, only the classical structure arises on a Galois extension of degree $2$). The case $N\simeq C_2\times C_2$ is also easy because the braces of order $4$ have already been classified in \cite[Proposition 2.4]{p3}. Other than the trivial brace (which corresponds to the classical structure), there is only one brace with additive group $C_2\times C_2$ (up to isomorphism), namely the brace
\[ (\Z/2\Z\times \Z/2\Z,+,\circ),\mbox{ where }
\begin{cases}
 (i,j)+(r,s) = (i+r,j+s)\\(i,j)\circ(r,s) = (i+r+js,j+s)
\end{cases}
\]
for all $i,j,r,s\in\Z/2\Z$. Clearly $(\Z/2\Z\times \Z/2\Z,\circ)$ is cyclic generated by $(0,1)$, and its unique non-trivial proper subgroup $\{(0,0),(1,0)\}$ is a left ideal of the brace under consideration. Hence, both of the groups $C_2$ and $C_2\times C_2$ are good.

\vspace{2mm}

Next, suppose that $N$ is cyclic of odd order and let $\circ$ be an operation such that $(N,\cdot,\circ)$ is a brace. We know from \cite[Theorem 1]{Rump} (also see \cite[Remark 1.7]{Tsang}) that necessarily $(N,\circ)$ is a \emph{$C$-group}, meaning that all Sylow subgroups are cyclic. Then by \cite[Lemma 3.5]{Murty}, this implies that
\[ (N,\circ) \simeq C_e\rtimes C_d\mbox{ for some $d,e\in \N$ with }\gcd(d,e)=1.\]
Thus, in the case that $q\nmid p-1$ for all prime divisors $p,q$ of $|N|$, the above must be a direct product. It follows that $(N,\circ)\simeq (N,\cdot)$, and we see from Remark \ref{ex:char subgp} that the subgroups of $(N,\circ)$ are all left ideals of $(N,\cdot,\circ)$. This proves that $N$ is a good group, as claimed.

\vspace{2mm}

This completes the proof of Theorem \ref{thm:N}.

\section*{Acknowledgements}

This research is supported by JSPS KAKENHI Grant Number 24K16891.

We would like to thank the referee for helpful comments.

\end{document}